\newenvironment{enumeratei}{\begin{enumerate}[\upshape (i)]}%
{\end{enumerate}}
\theoremstyle{plain}
\newtheorem{theorem}{Theorem}
\newtheorem{corollary}[theorem]{Corollary}
\newtheorem{lemma}[theorem]{Lemma}
\newtheorem{proposition}[theorem]{Proposition}
\numberwithin{equation}{section}
\numberwithin{theorem}{section}
\title[Free centre-by-nilpotent-by-abelian Lie rings]
{Torsion in free centre-by-nilpotent-by-abelian Lie rings  of rank 2 }
\author{Ralph St\"{o}hr}
\address{School of Mathematics, University of Manchester, Alan Turing Building,
Manchester, M13 9PL,
United Kingdom}\email{Ralph.Stohr@manchester.ac.uk }
\date{\today}
\subjclass[2010]{Primary 17B01, 17B55}
\begin{document}

\begin{abstract}
For $c\geq 2$,
the free centre-by-(nilpotent-of-class-c-1)-by abelian Lie ring on a set $X$ is the quotient
$L/[(L')^c,L]$ where $L$
is the free Lie ring on $X$, and $(L')^c$ denotes the $c$th term of the lower central series of the derived ideal $L'=L^2$ of $L$. In this paper we give a complete description of the torsion subgroup of its additive group in the case where $|X|=2$ and $c$ is a prime number.
\end{abstract}

\maketitle

\section{Introduction}

For an integer  $c\geq 2$, the free centre-by-(nilpotent-of-class $c-1$)-by-abelian Lie ring on a set $X$ is the quotient
    \begin{equation}\label{1.1}
   L /[(L')^c,L]
\end{equation}
 where $L$ is an (absolutely) free Lie ring on $X$ and  $(L')^c$ is the $c$th term of the lower central series of the derived ideal $L'=L^2$ of $L$. In this note we determine the torsion subgroup of the additive group of \eqref{1.1} in the case where $L$ has rank $2$, that is $X$ is a set of two elements, and $c$ is a prime number. It is easily observed that for any $c\geq2$ and any $X$ with $|X|\geq2$ the torsion subgroup of \eqref{1.1} is contained in the central ideal  $(L')^c /[(L')^c,L]$. If $c=p$, where $p$ is a prime, then the ideal $(L')^p /[(L')^p,L]$ is a direct sum of a free abelian group and an infinite elementary abelian $p$-group. We exhibit an explicit
 $\mathbb{Z}_p$-basis of the torsion part if $|X|=2$. This is the main result of this paper, see Theorem 5.1 in Section 5. In the earlier paper \cite{alexandroustohr} a complete description of the torsion subgroup of the additive group of the more reduced quotient
 \begin{equation}\label{1.2}
    L/([(L')^c,L]+L''')
 \end{equation}
 was obtained, again in the case of rank $2$, but for arbitrary $c\geq 2$. Note that \eqref{1.2} coincides with \eqref{1.1} for $c=2$ and $c=3$, but not for $c\geq 4$. We make essential use of the results from \cite{alexandroustohr}.

 The interest in torsion in  relatively free Lie rings of the form \eqref{1.1} as well as their group-theoretic counterparts, the relatively free groups $F/[\gamma_c(F'),F]$,where $F$ is a free group and $\gamma_c(F')$ is the $c$th term of the lower central series of the derived subgroup $F'$, has a long history. If $c=2$, these turn into the free centre-by-metabelian Lie rings and the free centre-by-metabelian groups, respectively, and it was the latter in which Kanta Gupta \cite{gupta} first discovered torsion elements, a major surprise at the time. Later torsion was detected in $F/[\gamma_c(F'),F]$ if $c$ is a prime \cite{stohr87} and if $c=4$ \cite{stohr93}. Quite surprising, though, it turned out that $F/[\gamma_c(F'),F]$ is torsion free if $c$ is divisible two distint primes  \cite{johnsonstohr}. As to Lie rings, early work in \cite{kuz'min77} on the free centre-by-metabelian Lie rings $L/[L'',L]$, and in particular on torsion in the additive group, turned out in need of some rectification, and this was eventually accomplished in \cite{mansuroglustohr} and \cite{kovacsstohr}. For larger values of $c$,  Drensky \cite{drensky} proved that for any prime $p$ the free Lie ring $L/[(L')^p,L]$ contains non-trivial multilinear elements of degree $2p+1$ which have order $p$.

 The paper is organized as follows. In Section 2 we introduce notation and assemble a number of preliminary results on the homogeneous components of free Lie rings. These are further examined in Section 3 where we derive our main result on Lie powers of free modules in prime degree, postponing, however, one key ingredient on Lie powers of prime degree $p$ over fields of characteristic $p$ to Section 4. In the final Section 5 we exploit the results of the previous sections to derive our main result.

\section{Notation and some preliminaries}

We write maps on the right and use left-normed notation for Lie products. Let $A$ be a free abelian group. By $L(A)$ we denote the free Lie ring on $A$. For a positive integer $c$, we let $L^c(A)$ denote the degree $c$ homogeneous component of $L(A)$, that is the span of all left normed simple Lie products  $[a_1,a_2,\dots,a_c]$ with $a_i\in A$. In particular, $L^1(A)=A$. The universal envelope of $L(A)$ can be identified with the tensor ring $T(A)=\bigoplus_{c\geq 0}T^i(A)$ where $T^0(A)=\mathbb{Z}$ and, for $c>0$, $T^c(A)=\underbrace{A\otimes\dots\otimes A}_c$, the $c$th tensor power of $A$. By $A^c$ we denote the $c$th symmetric power of $A$. The free metabelian Lie ring on $A$ is the quotient of $L(A)$ by its second derived ideal: $M(A)=L(A)/L(A)''$. This too is a graded Lie ring and we let $M^c(A)$ denote its $c$th homogeneous component, that is $M^c(A)=L^c(A)/(L^c(A)\cap L(A)'')$. We call $L^c(A)$ and $M^c(A)$ the $c$th free Lie power and the $c$th free metabelian Lie power of $A$, respectively. It is well-known that if $\mathcal{A}$ is an ordered $\mathbb{Z}$-basis of $A$, then the left normed simple Lie products $[b_1,b_2,\dots,b_c]$ with $b_i\in \mathcal{A}$ and $b_1>b_2\leq\dots\leq b_c$ form a $\mathbb{Z}$-basis of $M^c(A)$ (see \cite[Section 4.2.2]{bakhturin}).

The canonical embedding of $L(A)$ into its universal envelope $T(A)$ induces in each degree $c$ an embedding $\nu_c:L^c(A)\rightarrow T^c(A)$. By a well-known theorem of Wever (see \cite[Chapter 5, Theorem 5.16]{MKS}), the composite of this embedding with the natural projection $\rho_c:T^c(A)\rightarrow L^c(A)$ defined by $a_1\otimes\dots\otimes a_c\mapsto [a_1,\dots,a_c]$ amounts to multiplication by $c$ on $L^c(A)$:
\begin{equation}\label{2.1}
    L^c(A) \xrightarrow{\nu_c} T^c(A)\xrightarrow{\rho_c} L^c(A),\hspace{2cm} \nu_c\rho_c=c.
\end{equation}
 The definition of $M^c(A)$ gives rise to a short exact sequence
\begin{equation}\label{2.2}
    0\rightarrow B^c(A)\rightarrow L^c(A)\xrightarrow{\eta_c} M^c(A)\mapsto 0
\end{equation}
where $B^c(A)=L^c(A)\cap L(A)''$ and $\eta_c$ is the natural projection map. Moreover, for $c\geqslant 2$ the metabelian Lie power  $M^c(A)$ fits into a short exact sequence
 \begin{equation}\label{2.3}
   0\rightarrow M^c(A)\xrightarrow{\mu_c} A\otimes A^{c-1} \xrightarrow{\kappa_c} A^c \rightarrow 0
 \end{equation} where the maps $\mu_c$ and $\kappa_c$ are given by \begin{equation*}
   [a_1,a_2,\dots,a_c]\mapsto a_1\otimes(a_2\circ\dots\circ a_c)-a_2\otimes(a_1\circ\dots\circ a_c)
 \end{equation*}
 and $a_1\otimes(a_2\circ\dots\circ a_c)\mapsto a_1\circ a_2\circ\dots\circ a_c$, respectively (see \cite[Corollary 3.2]{hannebauerstohr}). Moreover, there is a map $\lambda_c:A\otimes A^{c-1}\rightarrow M^c(A)$, given by
 \begin{equation*}
   a_1\otimes(a_2\circ\dots\circ a_c)\mapsto [a_1,a_2,a_3,\dots,a_c]+[a_1,a_3,a_2,\dots,a_c]+\dots+[a_1,a_c,a_2,\dots,a_{c-1}],
 \end{equation*}
 such that the composite of $\mu_c$ and $\lambda_c$ amounts to multiplication by $c$ on $M^c(A)$:
 \begin{equation}\label{2.4}
    M^c(A) \xrightarrow{\mu_c} A\otimes A^{c-1}\xrightarrow{\lambda_c} M^c(A),\hspace{2cm} \mu_c\lambda_c=c
\end{equation}
see \cite[Section 3]{alexandroustohr}. Finally, for $c\geq 2$ there is a map $\theta_c:M^c(A)\rightarrow L^c(A)$ given by
\begin{equation}\label{2.5}
    [a_1,\dots,a_c]\mapsto \frac{1}{c}\left( \sum_{\sigma
}\left[ a_{1},a_{\pi \left( 2\right) },...,a_{\pi \left( c\right) }%
\right] -\sum_{\tau }\left[ a_{2},a_{\tau \left( 1\right) },...,a_{\tau
\left( c\right) }\right] \right) ,
\end{equation}
where the sums run over all permutations $\sigma$ of $\{2,3,...,n\}$ and
all permutations $\tau $ of $\{1,3,...,n\},$ respectively. Although we work over $\mathbb{Z}$, the factor $1/c$ in \eqref{2.5} makes sense as the expression on the right hand side can be written as a Lie polynomial with integer coefficients(see \cite[Section 2]{bryantkovacsstohr}). This Lie polynomial has been calculated  in \cite[Proposition 7.3]{bryantstohr00}, but since it is rather involved, we prefer to use the compact form \eqref{2.5} in what follows.
The composite of $\theta_c$ and the natural projection $\eta_c$ as in \eqref{2.2} amounts to multiplication by $(c-2)!$ on $M^c(A)$:
\begin{equation}\label{2.6}
    M^c(A) \xrightarrow{\theta_c} L^c(A)\xrightarrow{\eta_c} M^c(A),\hspace{2cm} \theta_c\eta_c=(c-2)!
\end{equation}
(see \cite[Section 2]{bryantkovacsstohr}).

 Now suppose that $A$ carries the structure of a module for the  polynomial ring $U=\mathbb{Z}[X]$ where $X$ is a finite set of variables. Then all the objects introduced in this section such as   Lie powers, symmetric powers etc. will be regarded as $U$-modules under the derivation action. For example, for $x\in X$, $a_i\in A$,
 \begin{equation*}
   [a_1, a_2,  \dots, a_c]x=\sum_{i=1}^{c}[a_1, \dots , a_ix,\dots, a_c],
\end{equation*}
Note that all the maps introduced in this section are compatible with the derivation action, that is, all these maps are, in fact, $\mathbb{Z}[X]$-module homomorphisms. This will be used in what follows without further reference being given. The ring of integers $\mathbb{Z}$ will be regarded as a trivial $U$-module.

\section{Lie powers of free modules}

In this section we retain the notation introduced in Section 2, but now we assume throughout that $A$ is a \emph{free} $U$-module for the polynomial ring $U=\mathbb{Z}(X)$. All homology groups in this section will be over the ground ring $U$. For brevity, if $W$ is a $U$-module, the homology groups $H_k(U,W)={\rm Tor}^U_k(W,\mathbb{Z})$, $k\geq 0,$  will be written as $H_k(W)$.

It is well known (see, for example, \cite[Lemma 5.2]{mansuroglustohr}) that if $A$ is a free $U$-module then both $T^c(A)$ and $A\otimes A^{c-1}$ with $c\geq2$ are also free $U$-modules under the derivation action.

\begin{lemma}
Let $A$ be a free $U$-module, $c\geq 2$. Then
\begin{enumeratei}
\item the tensor products $L_c(A)\otimes_U\mathbb{Z}$ and $M_c(A)\otimes_U\mathbb{Z}$ are direct sums of a free abelian group and a torsion group of exponent dividing $c$,
\item for $k\geq 1$ the homology groups $H_k(L_c(A))$ and $H_k(M_c(A))$ are torsion groups of exponent dividing $c$.
\end{enumeratei}
\end{lemma}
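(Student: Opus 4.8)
The plan is to exploit the multiplication-by-$c$ maps \eqref{2.1}, \eqref{2.4} and \eqref{2.6} together with the fact that $T^c(A)$ and $A\otimes A^{c-1}$ are free $U$-modules. First I would treat the metabelian case, since it is governed by the cleanest short exact sequence. Tensoring \eqref{2.3} with $\mathbb{Z}$ over $U$ and taking the long exact sequence for $\mathrm{Tor}^U_*(-,\mathbb{Z})$, and using that $A\otimes A^{c-1}$ and $A^c$ are free $U$-modules (hence have vanishing higher homology), gives $H_k(M^c(A))\cong H_{k+1}(A^c)=0$ for $k\geq 2$, and an exact sequence $0\to H_1(M^c(A))\to (A\otimes A^{c-1})\otimes_U\mathbb{Z}\xrightarrow{\bar\kappa_c}A^c\otimes_U\mathbb{Z}\to M^c(A)\otimes_U\mathbb{Z}\to 0$ in low degrees. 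So $H_k(M^c(A))=0$ for $k\geq2$ automatically, and it remains to bound the exponent of $H_1(M^c(A))$ and the torsion of $M^c(A)\otimes_U\mathbb{Z}$. For this I would use the retraction \eqref{2.4}: applying the functor $-\otimes_U\mathbb{Z}$ and then $H_k(-)$ to the composite $\mu_c\lambda_c=c$ shows that multiplication by $c$ factors through the (free, hence homologically trivial in positive degrees) module $A\otimes A^{c-1}$; concretely, $c\cdot\mathrm{id}$ on $H_k(M^c(A))$ factors as $H_k(M^c(A))\xrightarrow{(\mu_c)_*}H_k(A\otimes A^{c-1})=0$ for $k\geq1$, so $c$ annihilates $H_k(M^c(A))$, and likewise on $\mathrm{Tor}^U_0$ the induced map $c\cdot\mathrm{id}$ on $M^c(A)\otimes_U\mathbb{Z}$ factors through the \emph{free} abelian group $(A\otimes A^{c-1})\otimes_U\mathbb{Z}$, so the torsion subgroup of $M^c(A)\otimes_U\mathbb{Z}$ has exponent dividing $c$, and the quotient by that torsion embeds in a free abelian group, hence is free abelian. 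This proves (i) and (ii) for $M^c(A)$.

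Next I would transfer everything to $L^c(A)$ via the short exact sequence \eqref{2.2}, $0\to B^c(A)\to L^c(A)\to M^c(A)\to0$. The point is that $B^c(A)=L^c(A)\cap L(A)''$ lies inside the second derived ideal, and on the second derived ideal of a free Lie ring one has good homological control: in fact $L(A)''$, as a $U$-module, is built from tensor/symmetric powers of $A$ in a way that makes each $B^c(A)$ a free $U$-module (this is essentially the same computation that underlies the freeness of $A\otimes A^{c-1}$ and is available from \cite{mansuroglustohr}, cf. \cite[Lemma 5.2]{mansuroglustohr}). Granting that $B^c(A)$ is $U$-free, the long exact sequence of \eqref{2.2} gives $H_k(L^c(A))\cong H_k(M^c(A))$ for $k\geq2$ (both sides zero) and, in low degrees, $0\to H_1(L^c(A))\to H_1(M^c(A))\to B^c(A)\otimes_U\mathbb{Z}\to L^c(A)\otimes_U\mathbb{Z}\to M^c(A)\otimes_U\mathbb{Z}\to0$. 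From the first map, $H_1(L^c(A))$ embeds into $H_1(M^c(A))$, so it too has exponent dividing $c$; and $L^c(A)\otimes_U\mathbb{Z}$ is an extension of (a submodule of) the free-abelian-plus-$c$-torsion group $M^c(A)\otimes_U\mathbb{Z}$ by a quotient of the free abelian group $B^c(A)\otimes_U\mathbb{Z}$, whence its torsion again has exponent dividing $c$ and the torsion-free quotient is free abelian.

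If one prefers to avoid invoking freeness of $B^c(A)$, the maps \eqref{2.1} and \eqref{2.6} give a direct alternative for $L^c(A)$: from $\nu_c\rho_c=c$ with $T^c(A)$ free, the same factoring argument as above shows $c$ annihilates $H_k(L^c(A))$ for $k\geq1$ and that the torsion of $L^c(A)\otimes_U\mathbb{Z}$ has exponent dividing $c$ with free-abelian complement, proving (i) and (ii) for $L^c(A)$ on the nose; vanishing of $H_k(L^c(A))$ for $k\geq2$ then drops out because $H_k(L^c(A))$ is simultaneously $c$-torsion and, via \eqref{2.6}, killed by $(c-2)!$ after passing through $M^c(A)$ whose higher homology already vanishes — more cleanly, one notes $(c-2)!\cdot\mathrm{id}$ on $H_k(L^c(A))$ factors through $H_k(M^c(A))=0$ while $c\cdot\mathrm{id}$ also kills it, and since $\gcd$ considerations are not quite enough here one instead just uses that $H_k(M^c(A))=0$ for $k\geq 2$ together with the exactness of \eqref{2.2} once $B^c$ freeness is in hand. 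The one genuine obstacle, then, is establishing that $B^c(A)$ is a free $U$-module (equivalently, that $H_k(B^c(A))=0$ for $k\geq1$); everything else is a formal diagram chase with the retraction maps. I expect this to follow from the decomposition of $L(A)''$ into pieces isomorphic to tensor products of free $U$-modules, exactly as in the proof of \cite[Lemma 5.2]{mansuroglustohr}, and so I would cite that result rather than redo it.
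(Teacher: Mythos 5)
Your ``direct alternative'' in the last paragraph --- factoring multiplication by $c$ on $L^c(A)$ through the free module $T^c(A)$ via \eqref{2.1}, and on $M^c(A)$ through $A\otimes A^{c-1}$ via \eqref{2.4} --- is precisely the paper's proof, and it already establishes both (i) and (ii) in full: $c\cdot\mathrm{id}$ on $H_k$ for $k\geq1$ factors through the vanishing higher homology of a free module, while on $H_0$ the kernel of the map into the free abelian group $H_0(T^c(A))$ (resp.\ $H_0(A\otimes A^{c-1})$) is killed by $c$ and the image is free abelian, which gives the splitting. Nothing more is needed; your extra observation that $H_k(M^c(A))=0$ for $k\geq2$ via \eqref{2.3} is true but not required by the statement.

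The genuinely problematic step is your preferred route to $L^c(A)$ through the claim that $B^c(A)$ is a free $U$-module. This does not follow from \cite[Lemma 5.2]{mansuroglustohr}, which concerns $T^c(A)$ and $A\otimes A^{c-1}$ only; $B^c(A)=L^c(A)\cap L(A)''$ admits no evident integral decomposition into tensor products of free modules, and there is no soft reason to expect freeness: the analogous statement already fails for $L^c(A)$ and $M^c(A)$ themselves, since their tensor products with $\mathbb{Z}$ over $U$ carry torsion --- that is the whole point of the paper. Moreover, if $B^c(A)$ were $U$-free, then Lemma 3.2 of the paper would be immediate and all of Section 4 would be superfluous; in fact the paper must work hard even for the much weaker conclusion that $B^p(A)\otimes_U\mathbb{Z}$ is torsion-free, reducing mod $p$ and invoking the nontrivial theorem of \cite{bryantstohr} that $B^p(V)$ is a direct summand of $T^p(V)$ over a field of characteristic $p$. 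Since your fallback argument is self-contained and handles $L^c(A)$ directly, simply drop the detour through $B^c(A)$.
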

\begin{proof}
By applying the homology functor to the maps in \eqref{2.1} we get that $H_k(\nu_c\rho_c)$ is multiplication by $c$ on $H_k(L_c(A))$ for all $k\geq 0$. Since $A$ is a free $U$-module, $H_0(T_c(A))$ is free abelian and $H_k(T_c(A))=0$ for $k\geq 1$. Then, for any $u\in H_k(L^cA)$ with $k>0$,
\begin{equation*}
    cu=uH_k(\nu_c\rho_c)=(uH_k(\nu_c))H_k(\rho_c)=0H_k(\rho_c)=0.
\end{equation*}

The same holds if $k=0$ and $u\in {\rm Ker}(H_0(\nu_c))$. Hence multiplication by $c$ annihilates both the homology groups $H_k(L_c(A))$ for $k\geq 1$ and the kernel of the homomorphism $H_0(\nu_c)$. The image of this homomorphism is contained in the free abelian group $H_0(T_c(A))$, and hence itself free abelian. The results (i) and (ii) for $L_c(A)$ follow. The proof of (i) and (ii) for $M_c(A)$ are obtained by a similar argument using the maps in \eqref{2.4} instead of those in \eqref{2.1}. \end{proof}

Now we consider Lie powers of prime degree. Let $p$ be a prime.  By applying the homology functor to the short exact sequence \eqref{2.2} we obtain the  exact sequence
\begin{equation}\label{3.1}
  \cdots  \rightarrow H_1(M^p(A))\rightarrow B^p(A)\otimes_U\mathbb{Z}\rightarrow L^p(A)\otimes_U\mathbb{Z}\xrightarrow{ \eta_p\otimes 1} M^p(A)\otimes_U\mathbb{Z}\mapsto 0.
\end{equation}
By Lemma 3.1., $L^p(A)\otimes_U\mathbb{Z}$ is a direct sum of a free abelian group and an elementary abelian $p$-group, and $H_1(M^p(A))$ is an elementary abelian $p$-group. Note that this does not exclude the possibility that these torsion subgroups are trivial. Now the exact sequence \eqref{3.1} yields that $B^p(A)\otimes_U\mathbb{Z}$ is a direct sum of a free abelian group and a (possibly trivial) $p$-group of exponent dividing $p^2$. In fact, we will show that this group has actually no torsion, in other words, we will prove the following result.
\begin{lemma}
Let $A$ be a free $U$-module and  $p$  a prime. Then the tensor product $B^p(A)\otimes_U\mathbb{Z}$ is a free abelian group.
\end{lemma}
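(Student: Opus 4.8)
The plan is to show that the torsion subgroup of $B^p(A)\otimes_U\mathbb{Z}$, which the exact sequence \eqref{3.1} has already confined to a $p$-group of exponent dividing $p^2$, actually vanishes. The natural strategy is to produce a retraction, or at least a splitting modulo torsion, of the map $B^p(A)\otimes_U\mathbb{Z}\to L^p(A)\otimes_U\mathbb{Z}$ in \eqref{3.1}, using the map $\theta_p$ from \eqref{2.5} and the multiplication-by-$(c-2)!$ identity \eqref{2.6}. Since $p$ is prime and $p\geq 2$, the factor $(p-2)!$ is coprime to $p$ whenever $p\geq 3$ (and for $p=2$ it equals $1$), so $\theta_p\eta_p=(p-2)!$ is an automorphism after inverting all primes dividing $(p-2)!$, none of which is $p$. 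I would apply the homology functor to \eqref{2.2} and chase the long exact sequence, combining the boundary map $H_1(M^p(A))\to B^p(A)\otimes_U\mathbb{Z}$ with the splitting data coming from $\theta_p$ and $\lambda_p$ to pin down the exponent of the torsion more precisely than the crude bound $p^2$.

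More concretely, here is the key step. From \eqref{2.6}, the composite $H_k(\theta_p)H_k(\eta_p)$ is multiplication by $(p-2)!$ on $H_k(M^p(A))$ for all $k\geq 0$; since $H_k(M^p(A))$ has exponent dividing $p$ by Lemma 3.1(ii), and $\gcd((p-2)!,p)=1$, multiplication by $(p-2)!$ is an isomorphism on each $H_k(M^p(A))$. Hence $H_k(\eta_p)$ is a split surjection for every $k$, and in particular the connecting homomorphism $H_1(M^p(A))\to H_0(B^p(A))=B^p(A)\otimes_U\mathbb{Z}$ in the long exact sequence of \eqref{2.2} is the \emph{zero} map. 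Therefore \eqref{3.1} reduces to a short exact sequence
\begin{equation*}
0\rightarrow B^p(A)\otimes_U\mathbb{Z}\rightarrow L^p(A)\otimes_U\mathbb{Z}\xrightarrow{\eta_p\otimes 1} M^p(A)\otimes_U\mathbb{Z}\rightarrow 0.
\end{equation*}
Now I would use \eqref{2.1} and \eqref{2.4} to understand the torsion of the two outer-looking terms. By Lemma 3.1(i), $L^p(A)\otimes_U\mathbb{Z}$ and $M^p(A)\otimes_U\mathbb{Z}$ are each a direct sum of a free abelian group and an elementary abelian $p$-group; the torsion subgroup of $L^p(A)\otimes_U\mathbb{Z}$ is exactly $\ker H_0(\nu_p)$, and likewise the torsion of $M^p(A)\otimes_U\mathbb{Z}$ is $\ker H_0(\mu_p)$. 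The claim $B^p(A)\otimes_U\mathbb{Z}$ is torsion-free then amounts to the statement that $\eta_p\otimes 1$ carries the torsion of $L^p(A)\otimes_U\mathbb{Z}$ \emph{isomorphically} onto the torsion of $M^p(A)\otimes_U\mathbb{Z}$.

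To prove that last isomorphism I would compare the two ``multiplication by $p$'' factorisations. On one hand $\nu_p\rho_p=p$ on $L^p(A)$; on the other hand $\eta_p$ is compatible with the embeddings into the tensor algebra in the sense that $\mu_p$ is induced by $\nu_p$ followed by the metabelianisation of $T^p(A)$ (this is how \eqref{2.3} arises, via \cite[Corollary 3.2]{hannebauerstohr}). Dualising: there is a section of $\eta_p$ up to the unit $(p-2)!\bmod p$ given by $\theta_p$, so $\theta_p\otimes 1$ splits $\eta_p\otimes 1$ on torsion, which forces $\eta_p\otimes 1$ to be injective on the torsion subgroup of $L^p(A)\otimes_U\mathbb{Z}$; surjectivity onto torsion is automatic since torsion goes to torsion and the cokernel of $\eta_p\otimes 1$ is zero. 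Combining injectivity with the already-established short exactness shows $B^p(A)\otimes_U\mathbb{Z}$ injects into the \emph{free} part of $L^p(A)\otimes_U\mathbb{Z}$, hence is free abelian.

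The main obstacle I anticipate is the very last point: verifying that $\theta_p\otimes 1$ really does split $\eta_p\otimes 1$ when restricted to torsion. The identity $\theta_p\eta_p=(p-2)!$ gives a section only after inverting $(p-2)!$; to run the argument integrally on a $p$-torsion subgroup I must check that the ``error term'' lives in the free part and so dies in the relevant quotient, which requires keeping careful track of where $\theta_p$ sends elements of $B^p(A)$ — equivalently, that $\theta_p$ maps $M^p(A)$ into a complement of $B^p(A)$ modulo torsion. An alternative, possibly cleaner, route is to bound the exponent of the torsion of $B^p(A)\otimes_U\mathbb{Z}$ by $p$ directly (using Lemma 3.1 applied to both $L^p$ and $M^p$ together with the splitting of the connecting map), then quote a degree/dimension count — for instance against the known structure of $M^p(A)$ from \cite{alexandroustohr} — to rule out even that. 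I would pursue the $\theta_p$-splitting argument first and fall back on the dimension count if the bookkeeping becomes unwieldy.
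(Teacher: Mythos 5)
The first half of your argument is sound: since $(p-2)!$ is prime to $p$ and $H_k(M^p(A))$ has exponent dividing $p$ for $k\geq 1$, the maps $H_k(\eta_p)$ are surjective, the connecting homomorphism $H_1(M^p(A))\to B^p(A)\otimes_U\mathbb{Z}$ vanishes, and \eqref{3.1} becomes a short exact sequence. But the decisive step fails. From $\theta_p\eta_p=(p-2)!$ you conclude that $\theta_p\otimes1$ ``splits $\eta_p\otimes 1$ on torsion, which forces $\eta_p\otimes 1$ to be injective on the torsion subgroup of $L^p(A)\otimes_U\mathbb{Z}$.'' A splitting does not give injectivity: writing $T$ and $T'$ for the torsion subgroups of $L^p(A)\otimes_U\mathbb{Z}$ and $M^p(A)\otimes_U\mathbb{Z}$, what the identity yields is a decomposition
\[
T=\bigl(T\cap\ker(\eta_p\otimes1)\bigr)\;\oplus\;(\theta_p\otimes1)\bigl(T'\bigr),
\]
and by your own short exact sequence the left-hand summand is precisely the torsion of $B^p(A)\otimes_U\mathbb{Z}$ --- the very thing you are trying to prove is zero. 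The identity $\theta_p\eta_p=(p-2)!$ gives only the \emph{easy} half of Proposition 3.3 (injectivity of $\theta_p\otimes1$ on $T'$, surjectivity of $\eta_p\otimes1$ onto $T'$); the hard half, that $(\theta_p\otimes1)(T')$ is all of $T$, is deduced in the paper \emph{from} Lemma 3.2, so your route is circular. You flag this obstacle yourself, but neither proposed remedy closes it: tracking the ``error term'' of $\theta_p$ just reproduces the same unknown summand, and a dimension count against \cite{alexandroustohr} is unavailable because Lemma 3.2 concerns an arbitrary free $U$-module $A$ for any finite $X$, whereas \cite{alexandroustohr} describes only the metabelian torsion in rank 2; there is no independent computation of $T$ to compare with (that computation is the paper's conclusion, not an input).

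The missing ingredient is of a different nature. The paper reduces mod $p$ and shows that $B^p(A)\otimes\mathbb{Z}_p$ is a \emph{projective} $\mathbb{Z}_p[X]$-module, because in characteristic $p$ the submodule $B^p(V)=L^p(V)\cap L(V)''$ is a direct summand of the free module $T^p(V)$ (Lemma 4.1, following Bryant--St\"ohr). Hence $H_1(B^p(A)\otimes\mathbb{Z}_p)=0$, so by \eqref{3.2} multiplication by $p$ is injective on $B^p(A)\otimes_U\mathbb{Z}$, which together with the exponent bound you already have gives freeness. Nothing in your proposal substitutes for this characteristic-$p$ splitting of the tensor power.
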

\begin{proof} Since we already know that  $B^p(A)\otimes_U\mathbb{Z}$ is a direct sum of a free abelian group and a $p$-group of finite exponent, it is sufficient to show that no non-zero element in $B^p(A)\otimes_U\mathbb{Z}$ is annihilated by $p$. We use reduction modulo $p$, that is the short exact sequence
\begin{equation*}
    0\rightarrow B^p(A)\xrightarrow{p}B^p(A)\rightarrow B^p(A)\otimes\mathbb{Z}_p\rightarrow 0
\end{equation*}
which, in its turn, gives rise to the exact sequence
\begin{equation}\label{3.2}
    \cdots\rightarrow H_1(B^p(A)\otimes\mathbb{Z}_p)\rightarrow B^p(A)\otimes_U\mathbb{Z}\xrightarrow{p}B^p(A)\otimes_U\mathbb{Z}\rightarrow B^p(A)\otimes_U\mathbb{Z}_p\rightarrow 0.
\end{equation}
The Lemma will be proved once we show that the homology group on the left is zero, and this will certainly follow if we can verify that $B^p(A)\otimes\mathbb{Z}_p$, regarded as a module for the polynomial ring $\mathbb{Z}_p[X]$, is projective. The proof of this fact will be given in the next section (see Corollary 4.2). This will then complete the proof of Lemma 3.2.
\end{proof}
Now we have all the ingredients in place to prove the main result of this section. Recall the homomorphism $\theta_c: M^c(A) \rightarrow L^c(A)$ defined by \eqref{2.5}.

\begin{proposition} Let $A$ be a free $U$-module and $p$ a prime. Then
 the torsion subgroups of $L^p(A)\otimes_U\mathbb{Z}$ and $M^p(A)\otimes_U\mathbb{Z}$ are isomorphic, and the homomorphism  $\theta_p\otimes 1$ maps the latter isomorphically onto the former.
\end{proposition}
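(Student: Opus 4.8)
The plan is to play the short exact sequence \eqref{2.2} (in the guise of the exact sequence \eqref{3.1}) off against the multiplication relation $\theta_p\eta_p=(p-2)!$ from \eqref{2.6}. Throughout, write $T_L$ and $T_M$ for the torsion subgroups of $L^p(A)\otimes_U\mathbb{Z}$ and $M^p(A)\otimes_U\mathbb{Z}$ respectively; by Lemma~3.1 these are elementary abelian $p$-groups, and any homomorphism of abelian groups carries torsion elements to torsion elements, so $\eta_p\otimes 1$ and $\theta_p\otimes 1$ restrict to homomorphisms $\bar\eta\colon T_L\to T_M$ and $\bar\theta\colon T_M\to T_L$. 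The whole statement will follow once we show $\bar\eta$ is injective and $\bar\theta\bar\eta$ is an automorphism of $T_M$.

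First I would pin down the kernel of $\eta_p\otimes 1$. In the exact sequence \eqref{3.1} the kernel of $B^p(A)\otimes_U\mathbb{Z}\to L^p(A)\otimes_U\mathbb{Z}$ is the image of $H_1(M^p(A))$, which is a torsion group by Lemma~3.1; since $B^p(A)\otimes_U\mathbb{Z}$ is free abelian by Lemma~3.2, that image must vanish. Hence $B^p(A)\otimes_U\mathbb{Z}$ embeds in $L^p(A)\otimes_U\mathbb{Z}$, so $\ker(\eta_p\otimes 1)$, being the image of a free abelian group under an injection, is torsion-free. In particular $\ker(\eta_p\otimes 1)\cap T_L=0$, i.e.\ $\bar\eta\colon T_L\to T_M$ is injective.

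Next I would bring in \eqref{2.6}. Restricting that relation to torsion gives that the composite $\bar\theta\bar\eta\colon T_M\to T_M$ is multiplication by $(p-2)!$. Since $p$ is prime, $(p-2)!$ is coprime to $p$, so multiplication by $(p-2)!$ is an automorphism of the elementary abelian $p$-group $T_M$, and likewise of $T_L$. From $\bar\theta\bar\eta$ being an automorphism of $T_M$ it follows that $\bar\theta$ is injective and $\bar\eta$ is surjective; together with the injectivity of $\bar\eta$ already established, $\bar\eta\colon T_L\to T_M$ is an isomorphism. Finally, from $\bar\theta\bar\eta=(p-2)!$ we get $\bar\theta=(p-2)!\cdot\bar\eta^{-1}$, a composite of the isomorphism $\bar\eta^{-1}\colon T_M\to T_L$ with the automorphism ``multiplication by $(p-2)!$'' of $T_L$; hence $\theta_p\otimes 1$ maps $T_M$ isomorphically onto $T_L$, which in particular yields $T_L\cong T_M$.

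I do not expect a serious obstacle here: the argument is essentially a diagram chase once Lemmas~3.1 and~3.2 are available. The one point that carries the real weight is the injectivity of $B^p(A)\otimes_U\mathbb{Z}\hookrightarrow L^p(A)\otimes_U\mathbb{Z}$, which rests on Lemma~3.2 and hence ultimately on the projectivity statement deferred to Section~4; the remaining care is purely bookkeeping, namely keeping track of the right-hand composition convention so that $\theta_p\eta_p$, and not $\eta_p\theta_p$, is the composite appearing in \eqref{2.6}.
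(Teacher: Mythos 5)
Your proof is correct and follows essentially the same route as the paper: it combines Lemma 3.1, the torsion-freeness of $\ker(\eta_p\otimes 1)$ extracted from \eqref{3.1} and Lemma 3.2, and the relation $\theta_p\eta_p=(p-2)!$ from \eqref{2.6}. The only difference is cosmetic — you establish that $\bar\eta$ is an isomorphism directly and deduce $\bar\theta=(p-2)!\,\bar\eta^{-1}$, whereas the paper argues surjectivity of $\bar\theta$ by contradiction — but the ingredients and logic are the same.
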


\begin{proof}
By Lemma 3.1(i) both  $L^p(A)\otimes_U\mathbb{Z}$  and  $M^p(A)\otimes_U\mathbb{Z}$ are direct sums of a free abelian group and an elementary abelian $p$-group. Consider the maps in \eqref{2.6}. By trivializing the $U$-action we obtain homomorphisms
\begin{equation*}
    M^p(A)\otimes_U\mathbb{Z} \xrightarrow{\theta_p\otimes 1} L^p(A)\otimes_U\mathbb{Z}  \xrightarrow{\eta_p\otimes 1} M^p(A)\otimes_U\mathbb{Z} ,\hspace{1cm} \theta_k\eta_k\otimes1=(p-2)!
\end{equation*}
So the restriction of the composite $\theta_k\eta_k\otimes 1$ to the torsion subgroup of the tensor product $M^p(A)\otimes_U\mathbb{Z}$, an elementary abelian $p$-group, is multiplication by $(p-2)!$, that is, it is an isomorphism. It follows that the homomorphism $\theta_p\otimes 1$ maps the torsion subgroup of $M^p(A)\otimes_U\mathbb{Z}$ isomorphically into the torsion subgroup of $L^p(A)\otimes_U\mathbb{Z}$. To prove the proposition, we need to verify  that this map is also surjective, that is, the homomorphism $\theta_k\otimes 1$ maps the torsion subgroup of  $M^p(A)\otimes_U\mathbb{Z}$ isomorphically \emph{onto} the torsion subgroup of $L^p(A)\otimes_U\mathbb{Z}$. But this is true since otherwise the restriction of $\eta_p\otimes 1$ to the torsion subgroup of $L^p(A)\otimes_U\mathbb{Z}$ would have a non-trivial kernel. This, however, is not the case, as follows from the exactness of \eqref{3.1}. Since $B^p(A)\otimes_U\mathbb{Z}$ is free abelian by Lemma 3.2, and $H_1(M^p(A))$ is torsion by Lemma 3.1(ii), there cannot be any torsion elements in the kernel of  $\eta_p\otimes 1$. This proves the proposition.
\end{proof}
In the next section we fill in the gap left in the proof of Lemma 3.2.

\section{The degree $p$ Lie power in characteristic $p$}

In this section $V$ denotes a vector space over a field $K$ of prime characteristic $p$. Moreover, we will assume that $V$ is a module for the polynomial ring $K[X]$   where $X$ is a finite set of indeterminates. Otherwise we will use all the notation introduced in Section 2, in particular, $L^p(V)$, $M^p(V)$ and $T^p(V)$ are the $p$th Lie, metabelian Lie, and tensor powers of $V$, respectively,  $B^p(V)=L^p(V)\cap L(V)''$ is the kernel of the natural projection  $L^p(V)\rightarrow M^p(V)$, and all of these will be regarded as $K[X]$-modules under the derivation action. Recall that $L^p(V)$ may be regarded as a submodule of the tensor power $T^p(V)$, and hence $B^p(V)$ is also a submodule of $T^p(V)$.

\begin{lemma}
The submodule $B^p(V)$ is a direct summand of the $K[X]$-module $T^p(V)$.
\end{lemma}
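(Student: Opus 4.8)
The goal is to show that $B^p(V) = L^p(V)\cap L(V)''$ is a direct summand of $T^p(V)$ as a $K[X]$-module, where $K$ has characteristic $p$. The natural strategy is to work over the symmetric group $\Sigma_p$ acting on $T^p(V)$ by place permutations, and to identify $B^p(V)$ with a summand cut out by a suitable idempotent-like element of $K\Sigma_p$, or — since $p \mid |\Sigma_p|$ and genuine idempotents are not available — by exhibiting an explicit $K[X]$-linear splitting. The key observation is that the place-permutation action of $\Sigma_p$ on $T^p(V)$ commutes with the derivation action of $K[X]$, so any $\Sigma_p$-equivariant splitting of the underlying vector spaces is automatically a $K[X]$-module splitting. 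Thus the problem reduces to a statement about $T^p(V)$ purely as a $K\Sigma_p$-module, which can then be applied degree-by-degree (or all at once) regardless of the $K[X]$-structure.

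First I would recall the classical decomposition of the tensor power in degree $p$. For any vector space $V$ over a field of characteristic $p$, there is a decomposition $T^p(V) = L^p(V) \oplus C$ where $C$ is a $\Sigma_p$-submodule complement, and moreover $L^p(V)$ itself decomposes. Concretely, one uses the fact (Wever's map $\rho_p$ and the operator $\omega = \sum_{\sigma\in\Sigma_p}\mathrm{sgn}\text{-free averaging}$, i.e. the left-normed bracketing operator) together with the structure of the $p$th tensor power. The cleanest route is via the decomposition of $T^p(V)$ coming from the action of the cyclic group $C_p \le \Sigma_p$: since a $p$-cycle $\zeta$ acts on $T^p(V)$ with $(\zeta - 1)^p = \zeta^p - 1 = 0$, the module $T^p(V)\!\downarrow_{C_p}$ is a sum of Jordan blocks, and one tracks how $L^p(V)$ and $B^p(V)$ sit inside relative to the filtration by powers of $(\zeta-1)$. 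I expect $B^p(V)$ to be precisely $(\zeta - 1)T^p(V) \cap L^p(V)$ or something closely related, and the point will be that this intersection is a direct summand because it equals the image of a well-chosen operator in $K\Sigma_p$ that is $K[X]$-linear.

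The main obstacle — and where the real content lies — is producing the explicit splitting in characteristic $p$, since one cannot simply average over $\Sigma_p$. I anticipate the author invokes known results on the modular representation theory of $\Sigma_p$ acting on $T^p(V)$: in characteristic $p$ the module $T^p(V)$, restricted to $\Sigma_p$, is a direct sum of $KC_p\!\uparrow^{\Sigma_p}$-type modules (the "regular-like" part) plus copies of modules killed by $\zeta - 1$, and both $L^p(V)$ and $B^p(V)$ respect this decomposition. The splitting of $B^p(V)$ off $T^p(V)$ then comes from choosing, in each indecomposable summand of $T^p(V)\!\downarrow_{\Sigma_p}$, a $C_p$-stable (hence $K[X]$-stable) complement to the part lying in $B^p(V)$ — which is possible precisely because the relevant uniserial $KC_p$-modules have their submodule lattice totally ordered, so "being a submodule cut out by a power of $(\zeta-1)$" is equivalent to "being a summand of the $C_p$-module", and $C_p$-module summands are automatically $K[X]$-module summands by the commuting-actions remark. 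So the plan is: (1) record that place permutations commute with derivations, reducing to a $K\Sigma_p$-statement; (2) restrict to $C_p$ and use the $(\zeta-1)$-filtration to pin down $B^p(V)$ as a specific term; (3) build a $C_p$-equivariant (hence $K[X]$-equivariant) projection onto that term using the explicit maps $\rho_p$, $\nu_p$, $\theta_p$, $\eta_p$ from Section 2 — the identities $\nu_p\rho_p = p = 0$ and $\theta_p\eta_p = (p-2)!$ (a unit mod $p$) are exactly the tools that, in characteristic $p$, convert the metabelian layer into a computable complement. The delicate step is verifying that this candidate projection is genuinely idempotent on the relevant summand; I'd expect that to hinge on a short computation with the defining formula \eqref{2.5} for $\theta_p$ combined with the exact sequence \eqref{2.2}.
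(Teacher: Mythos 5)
Your plan has a genuine gap at its very first reduction, and the remaining steps are not carried out, so this cannot be accepted as a proof.

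The flawed step is the claim that, because place permutations commute with the derivation action, ``any $\Sigma_p$-equivariant splitting of the underlying vector spaces is automatically a $K[X]$-module splitting,'' so that the problem reduces to a statement about $T^p(V)$ as a $K\Sigma_p$-module. The two actions do commute, but that only guarantees that an operator lying in the image of $K\Sigma_p\to\operatorname{End}_K(T^p(V))$ is $K[X]$-linear; an arbitrary $\Sigma_p$-equivariant linear projection onto a $\Sigma_p$-stable subspace need not commute with derivations, and a $K\Sigma_p$-module complement need not be a $K[X]$-submodule. So to make your strategy work you would have to exhibit the projection onto $B^p(V)$ as an explicit element of the group algebra (or at least as a natural transformation of the functor $T^p$), and this is precisely where the difficulty lies: in characteristic $p$ there is no Lie idempotent in degree $p$ — indeed $L^p(V)$ itself is \emph{not} a direct summand of $T^p(V)$ when $\dim V\geq 2$ — and the identities you propose to use point the wrong way ($\nu_p\rho_p=p=0$ is the obstruction, not a tool, and $\theta_p\eta_p=(p-2)!$ is an endomorphism of $M^p(V)$, not of $T^p(V)$, so it does not by itself produce an idempotent on the tensor power). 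Your guess that $B^p(V)$ equals $(\zeta-1)T^p(V)\cap L^p(V)$ is left unverified, and you explicitly defer the ``delicate step'' of checking idempotence, which is the entire content of the lemma.

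For comparison, the paper's argument (following Bryant--St\"ohr, \emph{Lie powers in prime degree}) avoids group-algebra idempotents altogether. It takes a homogeneous basis $\mathcal{B}$ of $L(V)$, uses the Poincar\'e--Birkhoff--Witt basis of $T^p(V)$ to build a filtration $T^p(V)=X_1>X_2>\cdots>X_m=L^p(V)$ by ``type,'' and splits every intermediate layer by a symmetrization map $\sigma_i$ whose normalizing factor $1/(k_1!\cdots k_p!)$ is invertible because all $k_j<p$ there. This yields $X_2=\operatorname{Im}(\sigma_2)\oplus\cdots\oplus\operatorname{Im}(\sigma_{m-1})\oplus L^p(V)$, and the submodule $W$ obtained by replacing $L^p(V)$ with $B^p(V)$ is then identified with the kernel of the map $\alpha:T^p(V)\to V\otimes V^{p-1}$, which is split by a map $\beta$ whose factor $1/(p-1)!$ is again invertible mod $p$. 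Hence $T^p(V)=W\oplus\operatorname{Im}(\beta)$ and $B^p(V)$ is a summand of $W$. Note that only the complement $\operatorname{Im}(\beta)$ is given by a group-algebra element; the summand $B^p(V)$ itself is cut out using the basis-dependent filtration, which is exactly the ingredient your plan is missing.
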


This result is essentially proved as Theorem 3.1 in \cite{bryantstohr}, except that there the module $V$ is assumed to be finite-dimensional. In what follows we reproduce the proof from \cite{bryantstohr} with some minor amendments necessary to accommodate infinite dimensional modules.

\begin{proof}
For each $r \ge 1$ choose a basis ${\mathcal{B}}^{(r)}$ of $L^r(V)$ and
let ${\mathcal{B}} = \bigcup_r {\mathcal{B}}^{(r)}$. Thus $\mathcal{B}$ is a basis of
$L(V)$. For $b \in {\mathcal{B}}$, let ${\rm deg}(b)$ denote the degree of
$b$, that is, ${\rm deg}(b) =r$ for $b \in {\mathcal{B}}^{(r)}$. Order
$\mathcal{B}$ in any way subject to $b < b'$ whenever ${\rm deg}(b)
< {\rm deg}(b')$. By the Poincar\'e--Birkhoff--Witt Theorem,
$T^p(V)$ has a basis ${\mathcal{C}}$ consisting of all products of the form
$b_1\otimes b_2\otimes\cdots  \otimes b_k$ with $b_1,\dots,b_k \in {\mathcal{B}}$,
$b_1 \le b_2 \le \cdots \le b_k$ and
${\rm deg}(b_1) + \cdots + {\rm deg}(b_k) = p$.
More specifically, any basis element $c \in {\mathcal{C}}$ has the form
\begin{equation}\label{4.1}
c = b_1^{(1)}\otimes\cdots\otimes b_{k_1}^{(1)}\otimes b_1^{(2)}\otimes\cdots \otimes b_{k_2}^{(2)}\otimes
\cdots\otimes b_1^{(p)}\otimes\cdots\otimes b_{k_p}^{(p)},
\end{equation}
where $k_1,\dots,k_p$ are non-negative integers such that
$k_1 + 2k_2 + \cdots +pk_p = p$ and where, for $i=1,\dots,p$, we have
$b_1^{(i)}, \dots, b_{k_i}^{(i)} \in {\mathcal{B}}^{(i)}$
and $b_1^{(i)} \le \cdots \le b_{k_i}^{(i)}$.
We call the $p$-tuple $(k_1,\dots,k_p)$ the {\it type\/}
of $c$ and denote it by ${\rm type}(c)$. Let $\Omega$ denote the set of
all such types. We order $\Omega$ (lexicographically) by
$(k_1,\dots,k_p) > (k_1',\dots,k_p')$ if for some $j \in \{1,\dots,p\}$
we have $k_i = k_i'$ for all $i<j$ but $k_j > k_j'$. Using this
ordering, write $\Omega = \{\omega_1,\omega_2,\dots,\omega_m\}$
where $\omega_1>\omega_2>\cdots >\omega_m$. Thus $\omega_1 =
(p,0,\dots,0)$ and $\omega_m = (0,\dots,0,1)$.

For $i=1,\dots,m$, define
${\mathcal{C}}_i = \{c \in {\mathcal{C}} : {\rm type}(c) = \omega_i\}$
and let $X_i$ denote the subspace of $T^p(V)$ spanned by
${\mathcal{C}}_i \cup {\mathcal{C}}_{i+1} \cup \cdots \cup {\mathcal{C}}_m$.
Also, write $X_{m+1} = 0$. Thus
$$T^p(V) = X_1 > X_2 > \cdots > X_m > X_{m+1} = 0.$$
Note that $X_i/X_{i+1}$ has basis ${\mathcal{C}}_i$ modulo $X_{i+1}$.
Furthermore, ${\mathcal{C}}_1$ consists of all products $b_1^{(1)}\otimes
b_2^{(1)}\otimes\cdots\otimes b_p^{(1)}$ with $b_1^{(1)},\dots,b_p^{(1)} \in
{\mathcal{B}}^{(1)}$ and $b_1^{(1)} \le \cdots \le b_p^{(1)}$.
Also, ${\mathcal{C}}_m = {\mathcal{B}}^{(p)}$ and $X_m = L^p(V)$.

Let $i \in \{1,\dots,m\}$ where $\omega_i = (k_1,\dots,k_p)$.
For $c \in {\mathcal{C}}_i$ written as in \eqref{4.1} it is well known
and easy to verify that the value of $c$ modulo $X_{i+1}$
is unchanged by any permutation of the factors $b_1^{(1)},
\dots,b_{k_p}^{(p)}$. In particular, for all
$\pi_1 \in {\rm Sym}(k_1)$, \dots, $\pi_p \in {\rm Sym}(k_p)$,
we have
\begin{equation}\label{4.2}
b_{\pi_1(1)}^{(1)}\otimes \cdots\otimes b_{\pi_1(k_1)}^{(1)}\otimes \cdots \otimes b_{\pi_p(1)}^{(p)}\otimes
\cdots\otimes b_{\pi_p(k_p)}^{(p)} + X_{i+1} = c + X_{i+1}.
\end{equation}
It follows easily that $X_i$ is a $K[X]$-submodule of $T^p(V)$.
For $c$ written as before let $\overline{c} \in
S^{k_1}(L^1(V)) \otimes \cdots \otimes S^{k_p}(L^p(V))$
be defined by
$$\overline{c} = (b_1^{(1)} \circ \cdots \circ b_{k_1}^{(1)})
\otimes (b_1^{(2)} \circ \cdots \circ b_{k_2}^{(2)}) \otimes
\cdots \otimes (b_1^{(p)} \circ \cdots \circ b_{k_p}^{(p)}).$$
Clearly $\{\overline{c} : c \in {\mathcal{C}}_i \}$ is a basis of
$S^{k_1}(L^1(V)) \otimes \cdots \otimes S^{k_p}(L^p(V))$. Furthermore,
it follows easily from \eqref{4.2} that the linear map given by
$c + X_{i+1} \mapsto \overline{c}$ is
a $K[X]$-module isomorphism from $X_i/X_{i+1}$ to
$S^{k_1}(L^1(V)) \otimes \cdots \otimes S^{k_p}(L^p(V))$. Thus
\begin{equation}\label{4.3}
X_i/X_{i+1} \cong S^{k_1}(L^1(V)) \otimes \cdots \otimes S^{k_p}(L^p(V)).
\end{equation}

Suppose that $i \in \{2,\dots,m-1\}$ where $\omega_i = (k_1,\dots,k_p)$.
Thus $k_1,\dots,k_p < p$ (and, in fact, $k_p = 0$).
Let $\sigma_i : S^{k_1}(L^1(V)) \otimes \cdots
\otimes S^{k_p}(L^p(V)) \to T^p(V)$ be the linear map defined on
the basis $\{\overline{c} : c \in {\mathcal{C}}_i\}$ by
$$\sigma_i(\overline{c}) = \frac{1}{k_1!\cdots k_p!}
\sum_{\pi_1 \in {\rm Sym}(k_1), \dots, \atop \pi_p \in {\rm Sym}(k_p)}
b_{\pi_1(1)}^{(1)}\otimes \cdots\otimes b_{\pi_1(k_1)}^{(1)}\otimes \cdots \otimes b_{\pi_p(1)}^{(p)}\otimes
\cdots\otimes b_{\pi_p(k_p)}^{(p)},$$
where $c$ is written as in \eqref{4.1}
It is straightforward to verify that $\sigma_i$ is a $K[X]$-module homomorphism.
It follows easily from \eqref{4.2} that $\sigma_i(\overline{c}) \in X_i$ and,
indeed, $\sigma_i(\overline{c}) + X_{i+1} = c + X_{i+1}$ for all $c \in
{\mathcal{C}}_i$. Thus the map $\sigma_i$ is injective and we have
$X_i = {\rm Im}(\sigma_i) \oplus X_{i+1}$. Since $X_m = L^p(V)$ we
therefore have
$$X_2 = {\rm Im}(\sigma_2) \oplus \cdots \oplus {\rm Im}(\sigma_{m-1})
\oplus L^p(V).$$
Let $W$ be the submodule of $X_2$ given by
\begin{equation}\label{4.4}
W = {\rm Im}(\sigma_2) \oplus \cdots \oplus {\rm Im}(\sigma_{m-1})
\oplus (L(V)'' \cap L^p(V)).
\end{equation}
Consider the maps
\begin{equation*}
    \alpha: T^p(V)\rightarrow V\otimes V^{p-1}, \hspace{2cm} \beta:V\otimes V^{p-1}\rightarrow T^p(V)
\end{equation*}
defined by
\begin{equation*}
    a_1\otimes a_2\otimes\dots\otimes a_p\mapsto a_1\otimes (a_2\circ\dots\circ a_p)
\end{equation*}
and
\begin{equation*}
     a_1\otimes (a_2\circ\dots\circ a_p)\mapsto \frac{1}{(p-1)!}
\sum_{\pi}a_1\otimes a_{\pi(2)}\otimes\cdots\otimes a_{\pi(p)},
\end{equation*}
where $a_i\in V$ and $\pi$ runs over all permutations of $\{2,3,\dots,c\}$. Then $\alpha$ is surjective and the composite $\beta\alpha$ is the identity map on $V\otimes V^{p-1}$. Hence we have a direct decomposition
\begin{equation*}
    T^p(V)={\rm Ker}(\alpha)\oplus {\rm Im}(\beta)\cong {\rm Ker}(\alpha)\oplus\;( V\otimes V^{p-1}).
\end{equation*}
We claim that ${\rm Ker}(\alpha)=W$. It is easily seen that $W$ is contained in $\rm Ker(\alpha)$.  To verify that we have actually equality, note that the elements
\begin{equation}\label{4.5}
    b_1^{(1)}\otimes
b_2^{(1)}\otimes\cdots\otimes b_p^{(1)}\;\;\;\; \text{with}\;\; b_1^{(1)},\dots,b_p^{(1)} \in
{\mathcal{B}}^{(1)}\;\; \text{and}\;\; b_1^{(1)} \le \cdots \le b_p^{(1)}
\end{equation}
form a basis of $T^p(V)$ modulo $X_2$. Furthermore, the Lie products
\begin{equation}\label{4.6}
     [ b_1^{(1)},
b_2^{(1)},\ldots, b_p^{(1)}]\;\;\;\; \text{with}\;\; b_1^{(1)},\dots,b_p^{(1)} \in
{\mathcal{B}}^{(1)}\;\; \text{and}\;\; b_1^{(1)}>b_2^{(1)} \le \cdots \le b_p^{(1)}
\end{equation}
form a basis of $L^p(V)$ modulo $L^p(V)\cap L(V)''$ (see Section 2). It follows that the elements \eqref{4.5} together with the elements \eqref{4.6} form a basis of $T^c(V)$ modulo $W$. Moreover, the images of these elements under the map $\alpha$ form a basis of $ V\otimes V^{p-1}$. Indeed, we have
\begin{equation}\label{4.7}
   ( b_1^{(1)}\otimes
b_2^{(1)}\otimes\cdots\otimes b_p^{(1)})\alpha=b_1^{(1)}\otimes
(b_2^{(1)}\circ\cdots\circ b_p^{(1)})
\end{equation}
and
\begin{equation}\label{4.8}
     ([ b_1^{(1)},
b_2^{(1)},\ldots, b_p^{(1)}])\alpha=b_1^{(1)}\otimes
(b_2^{(1)}\circ\cdots\circ b_p^{(1)})-\;b_2^{(1)}\otimes
(b_1^{(1)}\circ\cdots\circ b_p^{(1)}).
\end{equation}
This can easily be seen from the short exact sequence \eqref{2.3} (with $V$ instead of $A$). Indeed, the elements \eqref{4.7} are mapped by $\kappa_p$ one-to-one onto the canonical basis of $V^p$, and the elements \eqref{4.8} are precisely the images of the canonical basis elements of $M^p(V)$ under the map $\mu_p$. Consequently, we have the desired equality ${\rm Ker(\alpha)}=W$, and so $T^p(V) = W \oplus {\rm Im}(\beta)$. By \eqref{4.4},
$L^p(V)\cap L(V)''$ is a direct summand of $W$. Thus $L^p(V)\cap L(V)''$ is a direct summand of $T^p(V)$ and we have Lemma 4.1.
\end{proof}
Now the result we need to complete the proof of Proposition 3.3 follows easily.
\begin{corollary}
If $V$ is a free $K[X]$-module, then $B^p(V)$ is a projective $K[X]$-module.
\end{corollary}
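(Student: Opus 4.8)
The plan is to derive the corollary in one step from Lemma 4.1, once we know that tensor powers of free $K[X]$-modules are free.

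First I would record the $K[X]$-analogue of the fact quoted just before Lemma 3.1: if $V$ is a free $K[X]$-module, then $T^p(V)$ is a free $K[X]$-module under the derivation action. This is exactly \cite[Lemma 5.2]{mansuroglustohr}; although that reference is phrased over $\mathbb{Z}$, its argument uses nothing special about the ground ring and applies verbatim with $\mathbb{Z}$ replaced by $K$. (Concretely, one lifts a $K$-basis of $V\otimes_{K[X]}K$ to a free $K[X]$-basis of $V$ and then, by the same computation as in the cited reference, obtains a free $K[X]$-basis of $T^p(V)$.)

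Then I would observe that the corollary is immediate. By Lemma 4.1 the submodule $B^p(V)=L^p(V)\cap L(V)''$ is a direct summand of the $K[X]$-module $T^p(V)$; since $T^p(V)$ is free, $B^p(V)$ is a direct summand of a free module, hence projective. This is the assertion of Corollary 4.2.

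I do not anticipate a genuine obstacle here; the only points requiring (routine) care are the freeness of $T^p(V)$ above and, for the application back in the proof of Lemma 3.2, the identification of $B^p(A)\otimes\mathbb{Z}_p$ with $B^p(A\otimes\mathbb{Z}_p)$ as a $\mathbb{Z}_p[X]$-module. The latter holds because $L^p(A)$ and $M^p(A)$ are free abelian, so reducing the short exact sequence \eqref{2.2} modulo $p$ stays exact and is compatible with the identifications $L^p(A)\otimes\mathbb{Z}_p\cong L^p(A\otimes\mathbb{Z}_p)$ and $M^p(A)\otimes\mathbb{Z}_p\cong M^p(A\otimes\mathbb{Z}_p)$; as $A\otimes\mathbb{Z}_p$ is free over $\mathbb{Z}_p[X]$, Corollary 4.2 then shows that $B^p(A)\otimes\mathbb{Z}_p$ is projective, so that the left-hand homology term in \eqref{3.2} vanishes and Lemma 3.2 follows.
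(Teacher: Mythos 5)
Your proof is correct and is essentially identical to the paper's: both deduce the corollary from Lemma 4.1 together with the freeness of $T^p(V)$ over $K[X]$ as in \cite[Lemma 5.2]{mansuroglustohr}. Your additional remarks on transporting the result back to $B^p(A)\otimes\mathbb{Z}_p$ concern the proof of Lemma 3.2 rather than the corollary itself, and are consistent with what the paper does there.
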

\begin{proof}
If $V$ is a free $K[X]$-module, then the tensor power $T^p(V)$ is also a free $K[X]$-module, see \cite[Lemma 5.2]{mansuroglustohr}). Since $B^p(V)$ is a direct summand of  $T^p(V)$, it is projective.
\end{proof}

 \section{The main result}
 In this Section $L$ denotes a free Lie ring of finite rank with free generating set $X$. Our aim is to determine the torsion subgroup of the additive group of the quotient \eqref{1.1}. In view of the
short exact sequence
$$
0 \rightarrow (L')^c/[(L')^c,L] \rightarrow L/[(L')^c,L] \rightarrow L/(L')^c
\rightarrow 0,
$$
this is a free central extension of the free (nilpotent-of-class-c-1)-by-abelian Lie ring
$ L/(L')^c$.  The additive structure of the latter is well-understood. Its underlying abelian group is free abelian \cite{bokut'}. Consequently, any torsion elements must be contained in the central quotient $(L')^c/[(L')^c,L]$, and it is this quotient we will focus on from now on. By the Shirshov-Witt Theorem, the derived ideal $L'$ is itself a free Lie ring, namely, the free Lie ring on $L'/L''$: $L'=L(L'/L'')$. This is a graded Lie ring  and its degree $c$ homogeneous component $L^c(L'/L'')$ is isomorphic to the lower central quotient
 \begin{equation}\label{5.1}
  L^c(L'/L'')\cong (L')^c/(L')^{c+1}.
 \end{equation}
 The adjoint representation induces on these lower central quotients the structure of an $L/L'$-module, and hence of a module for its universal envelope $U=U(L/L')$. The latter may be identified with the polynomial ring on $X$: $U=\mathbb{Z}[X]$. Thus \eqref{5.1} is actually  a $U$-module isomorphism.
 In view of the canonical isomorphism
 \begin{equation*}
   ((L')^c/(L')^{c+1}\otimes_U\mathbb{Z}\cong (L')^c/[(L')^c,L],
 \end{equation*}
   trivializing the $U$-action on both sides of \eqref{4.1} gives an isomorphism
 \begin{equation}\label{5.2}
 (L')^c/[(L')^c,L]\cong  L^c(L'/L'')\otimes_U\mathbb{Z}.
 \end{equation}
 We will exploit this isomorphism to investigate the additive structure of the quotient on the left hand side by examining the tensor product on the right hand side.

 Suppose that $L$ has rank $2$ and, say, $X=\{x,y\}$. Then $L'/L''$ is a free cyclic module over the polynomial ring  $U=\mathbb{Z}[x,y]$ with free generator $[y,x]$, see \cite[Proof of Theorem 6.1]{alexandroustohr}. If $c$ is a prime, say $c=p$, then Proposition 3.3 applies to the tensor product on the right hand side of \eqref{5.2}. Hence this tensor product is a direct sum of a free abelian group and an elementary abelian $p$-group. Moreover, the torsion subgroup is the image in $L^p(L'/L'')\otimes_U\mathbb{Z}$ of the torsion subgroup of $M^p(L'/L'')\otimes_U\mathbb{Z}$ under the map $\theta_p\otimes 1$. A complete description of the latter is given in \cite[Corollary 6.2]{alexandroustohr}. The elements
 \begin{equation*}
  [[u,y],[u,x],\underbrace{u,\dots,u}_{p-2}]\otimes 1
  \end{equation*}
  where $u=[y,x,\underbrace{x,\dots,x}_s,\underbrace{y,\dots,y}_t]$ with $s,t\geqslant 0$ form a basis of this torsion subgroup as a $\mathbb{Z}_p$-module. Applying $\theta_p\otimes 1$ to such a basis element gives
  \begin{equation*}
    \frac{(p-2)!}{p}\sum_{i=0}^{p-1}([[u,y],\underbrace{u,\dots,u}_i,[u,x],\underbrace{u,\dots,u}_{p-2-i}]-[[u,x],\underbrace{u,\dots,u,}_i,[u,y],\underbrace{u,\dots,u}_{p-2-i}])\otimes 1.
  \end{equation*}
Since this is an element of order $p$ we can drop the factor of $(p-2)!$ in the statement of our main result, which summarizes the discussion in this final section.
\begin{theorem}
Let $L$ be a free Lie ring of rank $2$ with free generators $x$ and $y$, and let $p$ be a prime. Then the quotient $(L')^p/[(L')^p,L]$ is a direct sum of a free abelian group and an elementary abelian $p$-group. Modulo $[(L')^p,L]$ the elements
\begin{equation*}
    \frac{1}{p}\sum_{i=0}^{p-1}([[u,y],\underbrace{u,\dots,u}_i,[u,x],\underbrace{u,\dots,u}_{p-2-i}]-[[u,x],\underbrace{u,\dots,u,}_i,[u,y],\underbrace{u,\dots,u}_{p-2-i}])
  \end{equation*}
  where $u=[y,x,\underbrace{x,\dots,x}_s,\underbrace{y,\dots,y}_t]$ with $s,t\geqslant 0$ form a basis of this torsion subgroup as a $\mathbb{Z}_p$-module.\hfill \qed
\end{theorem}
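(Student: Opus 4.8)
The plan is to reduce the statement, by a chain of standard identifications, to the explicit description of the torsion of the free metabelian Lie power $M^p(L'/L'')\otimes_U\mathbb{Z}$ obtained in \cite{alexandroustohr}, and then to transport that description through the isomorphism $\theta_p\otimes 1$ provided by Proposition 3.3.

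First I would record that every torsion element lies in $(L')^p/[(L')^p,L]$. The short exact sequence
$$0\to (L')^p/[(L')^p,L]\to L/[(L')^p,L]\to L/(L')^p\to 0$$
realizes $L/[(L')^p,L]$ as a central extension of $L/(L')^p$, whose additive group is free abelian by \cite{bokut'}; since this quotient is torsion-free, every torsion element of $L/[(L')^p,L]$ maps to $0$ and hence lies in the central ideal $(L')^p/[(L')^p,L]$. Next, by the Shirshov--Witt theorem $L'$ is the free Lie ring on $L'/L''$, so its degree-$p$ homogeneous component $L^p(L'/L'')$ is isomorphic to $(L')^p/(L')^{p+1}$, and under the adjoint action this is an isomorphism of modules over $U=U(L/L')=\mathbb{Z}[x,y]$. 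Trivializing the $U$-action yields $(L')^p/[(L')^p,L]\cong L^p(L'/L'')\otimes_U\mathbb{Z}$. In rank $2$ the module $L'/L''$ is free cyclic over $U$ on the generator $[y,x]$ by \cite[Proof of Theorem 6.1]{alexandroustohr}, hence a free $U$-module, so by Lemma 3.1(i) and Proposition 3.3 the right-hand side is a direct sum of a free abelian group and an elementary abelian $p$-group, and its torsion subgroup is the isomorphic image under $\theta_p\otimes 1$ of the torsion subgroup of $M^p(L'/L'')\otimes_U\mathbb{Z}$.

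It then remains to make this image explicit. By \cite[Corollary 6.2]{alexandroustohr} the torsion subgroup of $M^p(L'/L'')\otimes_U\mathbb{Z}$ has a $\mathbb{Z}_p$-basis consisting of the elements $[[u,y],[u,x],u,\dots,u]\otimes 1$ with $u=[y,x,x,\dots,x,y,\dots,y]$ ($s$ copies of $x$ followed by $t$ copies of $y$, $s,t\ge 0$). I would compute $\theta_p\otimes 1$ on each such basis element by substituting into the compact formula \eqref{2.5}: because all arguments after the first two coincide with $u$, the two sums over permutations in \eqref{2.5} collapse, each admissible position of $[u,x]$ (respectively $[u,y]$) being attained with multiplicity $(p-2)!$, and one is left with $(p-2)!/p$ times precisely the alternating sum displayed in the theorem. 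Since the element lies in an elementary abelian $p$-group and $(p-2)!\equiv 1\pmod p$ by Wilson's theorem, the scalar $(p-2)!$ acts invertibly and may be dropped without altering the $\mathbb{Z}_p$-span; as $\theta_p\otimes 1$ is injective on the torsion subgroup, the resulting elements again form a $\mathbb{Z}_p$-basis, and transporting them back along the isomorphism above gives the asserted basis of the torsion of $(L')^p/[(L')^p,L]$.

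The main obstacle, and the only step not reducible to quoting an earlier result, is this final computation. One must check carefully that the double sum in \eqref{2.5} reduces exactly as claimed in the presence of the $p-2$ repeated arguments $u$ --- that the sole effect of the repetition is the uniform multiplicity $(p-2)!$ and that no further coincidences occur among the left-normed products so produced --- and one must confirm that the a priori rational prefactor $(p-2)!/p$, combined with the integrality of the Lie polynomial underlying \eqref{2.5}, yields a genuine element of order exactly $p$ rather than one divisible by a higher power of $p$. The latter is automatic once the former is in place, since $\theta_p\otimes 1$ carries a basis element of an elementary abelian $p$-group to a nonzero element, which therefore has order $p$.
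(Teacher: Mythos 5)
Your proposal is correct and follows essentially the same route as the paper: reduction to the central ideal via Bokut's theorem, the identification $(L')^p/[(L')^p,L]\cong L^p(L'/L'')\otimes_U\mathbb{Z}$, freeness of $L'/L''$ over $U=\mathbb{Z}[x,y]$, Proposition 3.3 to transport the torsion from $M^p(L'/L'')\otimes_U\mathbb{Z}$ via $\theta_p\otimes 1$, the basis from \cite[Corollary 6.2]{alexandroustohr}, and the collapse of the permutation sums in \eqref{2.5} to a factor of $(p-2)!$, which is a unit mod $p$ and may be dropped. No gaps.
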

The legality of the factor $1/p$ in the statement of the theorem is explained in Section 2.

\setlength{\parindent}{0pt}

\begin{thebibliography}{10}

\bibitem{alexandroustohr}

Maria Alexandrou and Ralph St\"ohr, Free centre-by-nilpotent-by-abelian Lie rings of rank 2, \textit{J. Austral. Math. Soc.}, Published online: 07 May 2015, pp. 1-11. DOI: https://doi.org/10.1017/S1446788715000051

\bibitem{bakhturin}
Yu. A. Bakhturin, \emph{Identical relations in Lie algebras,}
Nauka, Moscow, 1985 (Russian). English translation: VNU Science
Press, Utrecht, 1987.

\bibitem{bokut'}

L.A. Bokut', A basis of free polynilpotent Lie algebras (Russian),\emph{ Algebra i Logika} \textbf{2}, no. 4 (1963), 13–-18,

\bibitem{bryantkovacsstohr}
R.M.Bryant, L.G. Kov\'acs and Ralph St\"ohr, Lie powers of modules for groups of prime order, \emph{Proc. London Math. Soc.} \textbf{84} (2002), 334--374.

\bibitem{bryantstohr00}

R.M.Bryant and Ralph St\"ohr, On the module structure of free Lie algebras , \emph{Trans. Amer. Math. Soc.} \textbf{352} (2000), 901--934.

\bibitem{bryantstohr}

R.M.Bryant and Ralph St\"ohr, Lie powers in prime degree, \emph{Q. J. Math.} \textbf{56} (2005), 473-489.

\bibitem{drensky}

Vesselin Drensky, Torsion in the additive group of relatively free Lie rings, \textit{Bull. Austral. Math. Soc.} \textbf{33} (1986), no. 1, 81--87.

\bibitem{gupta}

 Chander Kanta Gupta,  The free centre-by-metabelian
groups,
 \emph{J. Austral. Math. Soc. } \textbf{16}  (1973), 294--299.

\bibitem {hannebauerstohr} T. Hannebauer and R. St\"ohr, Homology of
groups with coefficients in free metabelian Lie powers and exterior
powers of relation modules and applications to group theory, in
{\it Proc.\ Second Internat.\ Group Theory Conf., Bressanone, 1989},
{\it Rend.\ Circ.\ Mat.\ Palermo (2) Suppl. \rm{23\/}} (1990), 77--113.

\bibitem{johnsonstohr}
 Marianne Johnson and Ralph St\"{o}hr, Free central extensions of groups and modular Lie powers of relation modules ,  \emph{Proc. Amer. Math. Soc.},\textbf{ 138} (2010), no. 11, 3807--3814.

\bibitem{kovacsstohr}
L.G. Kov\'acs and Ralph St\"ohr, Free centre-by-metabelian Lie algebras in characteristic 2, \emph{Bull.Lond. Math. Soc.},  \textbf{46} (2014), 491--502.


\bibitem{kuz'min77}  Yu. V. Kuz'min, Free center-by-metabelian groups, Lie
algebras and $\mathcal{D}$-groups ({R}ussian), \emph{ Izv. Akad. Nauk SSSR
Ser. Mat.} \textbf{41} (1977),  no. 1, 3--33, 231. English translation:  \emph{Math. USSR Izvestija} \textbf{11} (1977),
no. 1, 1--30.


\bibitem{MKS}
Magnus, W., Karras, A,  Solitar, D. \emph{Combinatorial Group
Theory}, Wiley-Interscience, New York, 1966.

\bibitem{mansuroglustohr}
Nil Mansuro\v{g}lu and Ralph St\"ohr, Free centre-by-metabelian Lie rings, \emph{Q. J. Math.} \textbf{65} (2014), no. 2, 555--579.

 \bibitem{stohr87}
 Ralph St\"{o}hr, On torsion in free central extensions
of some torsion-free groups,
 \emph{J. Pure Appl. Algebra}  \textbf{46}  (1987),  no. 2-3, 249--289.

 \bibitem{stohr93}
 Ralph St\"{o}hr, Homology of free Lie powers and torsion in groups, \emph{Israel J. Math.} \textbf{84 }(1993), 65--87.

\end{thebibliography}
\end{document}